\newtheorem{conjecture}{Conjecture}
\newtheorem{theorem}{Theorem}[section]
\newtheorem{lemma}[theorem]{Lemma}
\newtheorem{corollary}[theorem]{Corollary}
\theoremstyle{definition}
\newtheorem{definition}[theorem]{Definition}
\title{A note on cube-free problems}
\author{Yuchen Meng}
\date{}
\begin{document}

\begin{abstract}
    Eberhard and Pohoata conjectured that every $3$-cube-free subset of $[N]$ has size less than $2N/3+o(N)$. In this paper we show that if we replace $[N]$ with $\mathbb{Z}_N$ the upper bound of $2N/3$ holds, and the bound is tight when $N$ is divisible by $3$ since we have $A=\{a\in \mathbb{Z}_N:a\equiv 1,2\pmod{3}\}.$ Inspired by this observation we conjecture that every $d$-cube-free subset of $\mathbb{Z}_N$ has size less than $(d-1)N/d$ where $N$ is divisible by $d$, and we show the tightness of this bound by providing an example $B=\{b\in\mathbb{Z}_N:b\equiv 1,2,\ldots,d-1\pmod{d}\}$. We prove the conjecture for several interesting cases, including when $d$ is the smallest prime factor of $N$, or when $N$ is a prime power. 

   We also discuss some related issues regarding $\{x,dx\}$-free sets and $\{x,2x,\ldots,dx\}$-free sets. A main ingredient we apply is to arrange all the integers into some square matrix, with $m=d^s\times l$ having the coordinate $(s+1,l-\lfloor l/d\rfloor)$. Here $d$ is a given integer and $l$ is not divisible by $d$.
\end{abstract}

\maketitle

\section{Introduction}
A set is called sum-free if there are no solutions to the equation $x+y=z$. For example, any subset of integers consisting of odd numbers is sum-free, as the sum of any two odd numbers results in an even number. The study on sum-free set traces its roots back to the early 20th century when Schur~\cite{Schur1} used a combinatorial argument to show that Fermat's Last Theorem does not hold in the finite field $\mathbb{F}_p$.

In addition to exploring the sum-free problem, the research community has shown considerable interest in generalizations. For instance, one of them is to study the so-called  $(k,l)$-sum-free sets, which is a set with no solutions to $x_1+\dots+x_k=y_1+\dots+y_l$. In particular, the avoidance density for such sets was recently determined by
by Jing and Wu~\cite{JW1,JW2}, generalizing the line of research for sum-free sets by Bourgain~\cite{Bourgain97}, by Eberhard, Green, and Manners~\cite{EGM}, and by Eberhard~\cite{Eberhard15}. 

In this note our primary focus lies in yet another branch of generalization for sum-free problems --- the study of cube-free subsets within the cyclic group $\mathbb{Z}_N$. To establish the foundation for our exploration, we present the definition of cubes, or more precisely, projective cubes:

\begin{definition}
    Given a multiset $S=\{a_1,\ldots,a_d\}$ of size $d$, we define
    the projective $d$-cube generated by $S$ as
    $$\Sigma^*S=\left\{\sum_{i\in I}a_i:\ \varnothing\neq I\subset [d]\right\}.$$
\end{definition}

\begin{definition}
    We say $A$ is $d$-cube-free if there does not exist a multiset $S$ of size $d$ with $(\Sigma^*S)\subset A$.
\end{definition}

For example, a set is $3$-cube-free if it contains no $\{x,y,z,x+y,y+z,x+z,x+y+z\}$ as a subset.

The motivation behind this research is derived from a similar problem concerning cube-free subsets of the set $[N]$, which was conjectured  by Eberhard and Pohoata:

\begin{conjecture}[Eberhard--Pohoata]
    Suppose $A\subset [N]$ is $3$-cube-free, then 
    $$|A|\leqslant (2/3+o(1))N.$$
    The equality holds when $A=\{x\equiv 1,2 \pmod{3}\}$ or $A=(N/3,N]$. 
    
\end{conjecture}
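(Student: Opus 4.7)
The strategy is to try to reduce the conjecture on $[N]$ to the cyclic-group bound $|A|\le 2N/3$ proved later in this paper for $A\subset\mathbb{Z}_N$, combined with Fourier analysis and a density-increment argument. The naive embedding $A\subset[N]\hookrightarrow\mathbb{Z}_M$ with $M>3N$ preserves $3$-cube-freeness (sums do not wrap), but yields only the trivial bound $|A|\le 2M/3\approx 2N$; a finer transfer is required.

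The first step is to consider the count of ordered (possibly degenerate) projective $3$-cubes in $A$,
\[
P_3(A)=\sum_{x,y,z}\mathbbm{1}_A(x)\,\mathbbm{1}_A(y)\,\mathbbm{1}_A(z)\,\mathbbm{1}_A(x+y)\,\mathbbm{1}_A(x+z)\,\mathbbm{1}_A(y+z)\,\mathbbm{1}_A(x+y+z),
\]
and expand via Fourier on $\mathbb{Z}_M$ with $M\geq 3N+1$. The three sums over $x,y,z$ impose three linear constraints on the seven dual frequencies, and the main term (all frequencies zero) is of order $|A|^7/M^3$. Since $3$-cube-freeness forces $P_3(A)$ to vanish up to degenerate contributions of size at most $|A|^3$, the error terms must cancel this main term, which by Parseval forces some non-trivial Fourier coefficient of $\mathbbm{1}_A$ to have size $\Omega(N)$.

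A large Fourier coefficient at frequency $\xi\neq 0$ implies, via Bohr-set techniques, that $A$ has density at least $2/3+\varepsilon'$ on some long arithmetic progression, which upon rescaling to $[N']$ allows iteration until contradiction. The two extremal examples suggest a dichotomy: Fourier mass concentrated near $\xi/M\approx 1/3$ signals the residue-class example $\{x\equiv 1,2\pmod 3\}$, while low-frequency concentration signals the interval example $(N/3,N]$. A full argument would branch on these cases and in each apply a tailored increment.

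The principal obstacle, and the reason this conjecture remains open, is extracting the sharp constant $2/3$. Standard density-increment methods typically produce bounds of the form $(2/3+o(1))N$ only with weak quantitative rates, and reconciling two distinct extremal families seems to demand a structure theorem along the lines: every $3$-cube-free $A\subset[N]$ with $|A|\geq(2/3-\varepsilon)N$ is $O(\varepsilon)N$-close in symmetric difference to one of the two extremal examples. Establishing this dichotomy rigorously is the heart of the conjecture and the main difficulty; while the matrix construction of this paper may help handle boundary effects when adapted to $[N]$, substantial new ideas appear necessary.
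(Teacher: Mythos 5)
This statement is not a theorem of the paper at all: it is the Eberhard--Pohoata conjecture, quoted purely as motivation, and the paper offers no proof of it (the paper only proves the analogous bound in $\mathbb{Z}_N$ and related special cases). So there is no ``paper proof'' to match, and your text should be judged as a standalone argument --- which it is not. You say so yourself in the last paragraph, but let me name the gaps concretely. First, the Fourier step is not carried out: expanding the seven-fold cube count over $\mathbb{Z}_M$ and claiming that cube-freeness ``forces some non-trivial coefficient of size $\Omega(N)$'' requires an actual estimate of the off-diagonal terms (there are four free frequencies and seven factors of $\widehat{\mathbbm{1}_A}$), and even granting it, this kind of pseudorandomness-implies-cubes argument works at \emph{every} constant density, so it cannot by itself single out the threshold $2/3$. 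Second, the density-increment step has no mechanism for terminating at the sharp constant: the two extremal families (a union of two residue classes mod $3$ and the interval $(N/3,N]$) are both highly structured, have large Fourier coefficients, and survive passage to subprogressions in different ways, so an increment iteration started at density $2/3+\varepsilon$ need not reach a contradiction. Third, the ``structure theorem'' you invoke at the end --- that any $3$-cube-free set of density near $2/3$ is close to one of the two extremal examples --- is precisely the content of the conjecture in strengthened form; assuming it is circular. In short, the proposal is a plausible research program, not a proof, and the statement remains open; the only rigorous results in this direction in the paper are the $\mathbb{Z}_N$ bounds of Theorem 1.5, which, as you note, transfer to $[N]$ only with a loss that destroys the constant.
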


It is easy to verify that the two examples are $3$-cube-free. However, it is important to note that when discussing the problem within cyclic groups whose order is divisible by $3$, the latter condition is no longer $3$-cube-free, while the former still holds. This observation suggests the following conjecture:

\begin{conjecture}\label{conj d-cube}
    Let $A$ be a $d$-cube-free subset of $\mathbb{Z}_{N}$ where $d\mid N$, then
    $$|A|\leqslant \dfrac{d-1}{d}N.$$
\end{conjecture}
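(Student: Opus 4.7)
The plan is to attack Conjecture~\ref{conj d-cube} by a residue-class reduction modulo $d$ combined with a structured cube-building step, closing the argument in the two tractable regimes advertised in the abstract ($d$ the smallest prime factor of $N$, and $N$ a prime power). The trivial half of the bound is immediate: since $d \mid N$, the reduction $\pi : \mathbb{Z}_N \to \mathbb{Z}_d$ is well-defined with fibers of size $N/d$, so if $\pi(A) \subsetneq \mathbb{Z}_d$ then $|A| \leq (d-1)(N/d)$ and we are done. I may therefore assume $A$ meets every residue class modulo $d$. Writing $A_r = A \cap \pi^{-1}(r) = r + d\,A_r'$ with $A_r' \subseteq \mathbb{Z}_{N/d}$, this gives $\sum_{r} |A_r'| = |A| > (d-1)N/d$. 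The dual pigeonhole on partial sums---which is what makes the extremal example $B = \{b : b \not\equiv 0 \pmod{d}\}$ $d$-cube-free in the first place---says that any multiset $\{t_1, \ldots, t_d\} \subseteq \mathbb{Z}_d$ has a non-empty subset summing to $0$ modulo $d$.

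With $|A^c| < N/d$ in hand, the plan is to build a $d$-cube $\{a_1, \ldots, a_d\}$ with $a_i \equiv 1 \pmod{d}$ for every $i$, so that $\sum_{i \in I} a_i$ lies in residue class $|I| \bmod d$ and the constraint $\Sigma^* S \subseteq A$ splits by $k = |I|$. Writing $a_i = 1 + d\,b_i$ with $b_i \in A_1'$, the requirement translates, modulo $M := N/d$, into
\[
\sum_{i \in I} b_i + c_{|I|} \;\in\; A_{|I| \bmod d}' \qquad \text{for every } \varnothing \neq I \subseteq [d],
\]
where $c_k = 0$ for $k < d$ and $c_d = 1$ is the carry from the full sum. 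This reduces cube-building in $\mathbb{Z}_N$ to a structured, colored cube-search in $\mathbb{Z}_M$, set up for induction on $N$. For $d = p$ the smallest prime factor of $N$, the rigidity of the prime quotient together with $|A_r'| \leq M$ and $\sum_r |A_r'| > (p-1)M$ forces several fibers to be nearly full, which should let the induction step close. For $N = p^n$ a prime power, I would induct on $n$ along the unique chain $\{0\} \subset p^{n-1}\mathbb{Z}_N \subset \cdots \subset \mathbb{Z}_N$, reducing each cube-search to one in $\mathbb{Z}_{p^{n-1}}$.

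The main obstacle I expect is the density collapse in the naive iteration: after fixing $a_1 \in A$, a union bound only gives $|A \cap (A - a_1)| > N(1 - 2/d)$, already below the $(d-2)N/(d-1)$ threshold needed to re-run the conjecture with parameter $d - 1$ on the residual set. The residue-class alignment above sidesteps this for the two special cases because it converts the $\mathbb{Z}_N$-problem into a fresh instance of the conjecture on $\mathbb{Z}_{N/d}$ rather than a weakened version of itself, but in full generality the quotient $\mathbb{Z}_{N/d}$ need not have $d$ as a divisor (e.g.\ $N=6$, $d=2$, $N/d=3$) and the clean induction breaks. At that point I would fall back on a Fourier count
\[
\#\{\text{cubes in } A\} \;=\; \frac{|A|^{2^d-1}}{N^{2^d-1-d}} + (\text{error controlled by large Fourier coefficients of } \mathbbm{1}_A),
\]
which at best yields the density bound $(d-1)/d + o(1)$ rather than the sharp $(d-1)/d$.
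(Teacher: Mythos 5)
This statement is a conjecture, and the paper does not prove it in full generality either; what the paper actually proves is Theorem~\ref{main thm} (the cases $d=3$, $d$ the smallest prime factor of $N$, and $N$ a prime power). Your proposal likewise does not prove the conjecture, but unlike the paper it also does not close the special cases: the decisive steps are left as hopes. Concretely: (a) restricting to cubes with all $a_i\equiv 1\pmod d$ is a genuine loss of generality that you never justify. The resulting system $\sum_{i\in I}b_i+c_{|I|}\in A'_{|I|\bmod d}$ involves all $d$ fibers $A_0',\dots,A_{d-1}'$ simultaneously, so it is \emph{not} ``a fresh instance of the conjecture on $\mathbb{Z}_{N/d}$''; moreover the hypothesis $\sum_r|A_r'|>(d-1)M$ only bounds the total deficiency by $M$, so a single fiber such as $A_1'$ may consist of one element $b$, forcing all $b_i=b$ and reducing your cube-search to the unverified memberships $kb\in A_k'$ and $db+1\in A_0'$. (b) In the case $d=p$ the smallest prime factor of $N$, the quotient $N/d$ need not be divisible by $d$ (e.g.\ $N=15$, $d=3$), so the induction ``on $N$'' inside the hypothesis $d\mid N$ does not even apply to $\mathbb{Z}_{N/d}$; ``forces several fibers to be nearly full, which should let the induction step close'' is not an argument. (c) The Fourier fallback you cite yields only $(d-1)/d+o(1)$, which is weaker than the stated bound. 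So there is a genuine gap: no case of the conjecture is actually established.

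For comparison, the paper's route is entirely different and elementary. It first reduces to diagonal-free sets: if some $x$ has $x,2x,\dots,(d-1)x\in A$, the cube generated by the multiset $\{x,x,\dots,x,y\}$ gives $A\cap(A-x)\cap\cdots\cap(A-(d-1)x)=\varnothing$ and hence $|A|\leqslant\frac{d-1}{d}N$; it then proves the $\{x,2x,\dots,(d-1)x\}$-free bound in the special cases by double counting, either over the family $\{\{x,2x,\dots,(d-1)x\}:x\neq 0\}$ (each set has size exactly $d-1$ and each nonzero element is covered exactly $d-1$ times when $d$ is the smallest prime factor of $N$), or block-by-block over the layers $L_i$ of $\mathbb{Z}_{p^l}$. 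Your remark that every $d$-term multiset in $\mathbb{Z}_d$ has a nonempty zero-sum subset is exactly the paper's tightness lemma for the extremal example, but it plays no role in the upper bound; if you want a workable plan, the reduction via $\{x,x,\dots,x,y\}$ plus a counting argument on the sets $\{x,2x,\dots,(d-1)x\}$ is the mechanism to adopt.
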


The main result of this note verifies Conjecture~\ref{conj d-cube} for many interesting cases:

\begin{theorem}\label{main thm}
    Let $A$ be a $d$-cube-free subset of $\mathbb{Z}_{N}$ where $d\mid N$. We have
    $$|A|\leqslant \dfrac{d-1}{d}N$$
    when one of following is true:\medskip
    
    \noindent\emph{(i)} $d=3$. \medskip
    
    \noindent\emph{(ii)} $d$ is the smallest prime factor of $N$. \medskip
    
    \noindent\emph{(iii)} $N$ is the power of some prime $p$. \medskip
\end{theorem}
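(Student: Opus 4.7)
My plan is to construct in each case a $d$-cube of the simplest shape $\Sigma^* S = \{x, 2x, \ldots, dx\}$, where $S$ is the multiset of $d$ copies of some $x \in \mathbb{Z}_N$. The theorem thus reduces to showing that $|A| > (d-1)N/d$ forces the existence of such $x$.

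The cleanest baseline is when every $k \in \{1, \ldots, d\}$ is coprime to $N/d$, which holds in case (ii) whenever $v_d(N) = 1$ and in certain sub-cases of (i) and (iii). Under the CRT isomorphism $\mathbb{Z}_N \cong \mathbb{Z}_d \times \mathbb{Z}_{N/d}$, decompose $A = \bigsqcup_{r \in \mathbb{Z}_d}\{r\} \times A_r$ and look for $x = (1, y)$; the condition $kx \in A$ becomes $ky \in A_{k \bmod d}$ for each $k \in [d]$, and each map $y \mapsto ky$ is a bijection of $\mathbb{Z}_{N/d}$. The one-line union bound
\[
\#\{y : ky \in A_{k \bmod d}\ \forall k \in [d]\}\ \geq\ \tfrac{N}{d} - \sum_{k=1}^{d}\Bigl(\tfrac{N}{d} - |A_{k \bmod d}|\Bigr)\ =\ |A| - (d-1)\tfrac{N}{d}\ >\ 0
\]
then supplies the required $x$.

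The remaining cases --- case (ii) with $v_d(N) \geq 2$, case (iii) with $k \geq 2$, and case (i) with $6 \mid N$ or $9 \mid N$ --- I handle by induction on $v_d(N)$. Let $K = d\mathbb{Z}_N \cong \mathbb{Z}_{N/d}$; since singleton index sets force $S \subseteq \Sigma^* S$, any $d$-cube inside $K$ is generated by a multiset $S \subseteq K$, and so $A \cap K$ is $d$-cube-free in $\mathbb{Z}_{N/d}$. The inductive hypothesis gives $|A \cap K| \leq (d-1)|K|/d$, forcing $|A \setminus K|$ to have density greater than $(d-1)/d$ in $\mathbb{Z}_N \setminus K$. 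Restricting the search to $x \in \mathbb{Z}_N \setminus K$ --- on which $x \mapsto kx$ is bijective for $k \in [d-1]$ coprime to $d$, and $x \mapsto dx$ is $d$-to-$1$ onto $K' := K \setminus d^2\mathbb{Z}_N$ --- a second union bound then locates $x$ provided $|A \cap K'|$ is sufficiently large; the shortfall $|A \cap K| - |A \cap K'| = |A \cap d^2\mathbb{Z}_N|$ is controlled by one more application of the inductive hypothesis to $A \cap d^2\mathbb{Z}_N$.

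The main obstacle is the configuration where $A \cap K$ is heavily concentrated in $d^2\mathbb{Z}_N$, making $|A \cap K'|$ too small for the second union bound to close. Overcoming this requires iterating the induction down the $d$-adic chain $\mathbb{Z}_N \supseteq d\mathbb{Z}_N \supseteq d^2\mathbb{Z}_N \supseteq \cdots$, which is exactly a column of the matrix arrangement described in the abstract, where $m = d^s l$ with $d \nmid l$ sits at coordinate $(s+1, l - \lfloor l/d \rfloor)$. In the hardest sub-cases --- case (iii) with $d = p^j$ for $j \geq 2$ and case (i) with $6 \mid N$ --- the multiset $S$ may have to be chosen less symmetrically than $d$ copies of a single element in order to respect this column structure.
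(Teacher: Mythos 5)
Your opening reduction --- that $|A|>\tfrac{d-1}{d}N$ must force a \emph{diagonal} cube $\{x,2x,\ldots,dx\}\subseteq A$ --- is false, and this sinks everything outside your CRT baseline. Take $d=3$, $N=18$, and $A=\{x\in\mathbb{Z}_{18}:3\nmid x\}\cup\{9\}$, so $|A|=13>12=\tfrac{2}{3}N$. If $3\nmid x$ then $3x$ is a multiple of $3$ different from $9$ (since $3x\equiv 9\pmod{18}$ forces $3\mid x$), so $3x\notin A$; if $3\mid x$ and $x\in A$ then $x=9$ and $2x=0\notin A$; and $0\notin A$. Hence $A$ contains no diagonal cube although its size exceeds the bound; it does contain the non-diagonal cube generated by $\{1,1,8\}$, namely $\{1,2,8,9,10\}$. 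Analogous examples such as $A=(\mathbb{Z}_{p^{3}}\setminus p\mathbb{Z}_{p^{3}})\cup\{p^{2}\}$ for $d=p$ kill the diagonal-only strategy precisely in the sub-cases you defer to induction (case (iii) with exponent at least $3$, case (i) with $6\mid N$ or $9\mid N$, case (ii) with $v_d(N)\geqslant 2$). So the ``main obstacle'' you flag at the end is not a technicality to be patched by iterating down the $d$-adic chain: no density bookkeeping can locate a diagonal cube, because none need exist, and the concession that $S$ ``may have to be chosen less symmetrically'' is exactly the missing content of the proof. There is also a structural hole independent of this: for $d=3$ with $6\mid N$ and $v_3(N)=1$, your induction cannot start (the hypothesis $3\mid N/3$ fails for $K=3\mathbb{Z}_N$, so the inductive statement does not apply to $A\cap K$), while the baseline also fails because $2$ is not invertible modulo $N/3$.

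For comparison, the paper's argument is built around non-diagonal cubes from the outset: it splits on whether $A$ contains $\{x,2x,\ldots,(d-1)x\}$ for some $x$. If it does, the cubes generated by $\{x,x,\ldots,x,y\}$ with $y$ ranging over $\mathbb{Z}_N$ give $A\cap(A-x)\cap\cdots\cap(A-(d-1)x)=\varnothing$, hence $|A|\leqslant\tfrac{d-1}{d}N$. If it does not, a covering/double-counting argument finishes: for $d=3$ the observation $|2\cdot A|\geqslant\tfrac12|A|$ with $A\cap 2\cdot A=\varnothing$; for case (ii) double counting over the family $\{\{x,2x,\ldots,(d-1)x\}:x\neq 0\}$; for case (iii) the same count carried out layer by layer in $\mathbb{Z}_{p^l}$. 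Your CRT union bound is correct and genuinely different from anything in the paper, but it only covers the situation where every $k\leqslant d$ is invertible modulo $N/d$ (essentially case (ii) with $v_d(N)=1$), which is the easiest portion of the theorem; the rest of the proposal does not go through.
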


Notably, the method we employed in proving Theorem~\ref{main thm} (i) holds considerable promise for  addressing  similar problems. To establish this theorem, we concentrate on subsets that are free of the diagonal solutions, namely $\{x, 2x, \ldots, (d-1)x\}$-free, and the proofs for Theorem~\ref{main thm} (ii) and Theorem~\ref{main thm} (iii) are subsequently derived from this fundamental idea.

\section{The tightness of the upper bound}\label{tightness}

\begin{theorem}
    The bound in conjecture \ref{conj d-cube} is tight, since we have 
    $$A=\{a\in\mathbb{Z}_{N}:a\equiv 1,2,\ldots,d-1\pmod{d}\}$$
    which is $d$-cube-free.
\end{theorem}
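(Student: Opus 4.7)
The plan is to verify directly that no multiset $S=\{a_1,\ldots,a_d\}$ of size $d$ can have its entire projective $d$-cube $\Sigma^*S$ contained in $A$. The heart of the argument is a pigeonhole on partial sums modulo $d$, exploiting the fact that every element of $A$ is coprime to $d$ in the sense of lying in a nonzero residue class mod $d$, and that every consecutive-block sum of the $a_i$'s is itself a member of $\Sigma^*S$.

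Concretely, suppose for contradiction that $\Sigma^*S \subset A$ for some multiset $S=\{a_1,\ldots,a_d\}$. Since each singleton $a_i$ lies in $\Sigma^*S \subset A$, we have $a_i \not\equiv 0 \pmod d$ for every $i$. Now introduce the partial sums
$$s_0 = 0, \quad s_k = a_1 + a_2 + \cdots + a_k \quad (1 \leq k \leq d),$$
considered as elements of $\mathbb{Z}_N$ (which makes sense because $d\mid N$ ensures reduction modulo $d$ is well defined on $\mathbb{Z}_N$). This produces $d+1$ residues in $\mathbb{Z}_d$, so by pigeonhole there exist $0 \leq i < j \leq d$ with $s_i \equiv s_j \pmod d$. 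Consequently
$$a_{i+1} + a_{i+2} + \cdots + a_j \;\equiv\; 0 \pmod d.$$

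The final step is to observe that the left-hand side, being a nonempty subset sum of $S$, belongs to $\Sigma^*S$ and therefore to $A$; yet no element of $A$ is divisible by $d$, giving the required contradiction. Thus $A$ is indeed $d$-cube-free, and since $|A| = \frac{d-1}{d}N$ by construction, the bound in Conjecture~\ref{conj d-cube} is attained.

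There is no real obstacle here; the only point to watch is that one must allow $i=0$ (so that $s_j$ itself can serve as the offending subset sum), and that one works with the residue class mod $d$ rather than mod $N$, which is legitimate precisely because $d\mid N$.
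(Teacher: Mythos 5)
Your proof is correct, but it takes a different (and more elementary) route than the paper. The paper reduces the cube modulo $d$ and proves a Cauchy--Davenport-type lemma: if $a_1,\ldots,a_t\in\mathbb{Z}_d\setminus\{0\}$ with $t\leqslant d$ and no nonempty subset sum vanishes, then the set of nonempty subset sums has size at least $t$; taking $t=d$ forces at least $d$ distinct nonzero residues inside $\mathbb{Z}_d$, a contradiction, so some nonempty subset sum is $\equiv 0\pmod d$ and hence lies outside $A$. The proof of that lemma requires Cauchy--Davenport for prime $d$ plus a two-level induction for composite $d$. You instead apply the classical pigeonhole on the prefix sums $s_0=0,s_1,\ldots,s_d$ modulo $d$: two of these $d+1$ values agree mod $d$, so some consecutive block $a_{i+1}+\cdots+a_j\equiv 0\pmod d$, and this block sum is an element of $\Sigma^*S\subset A$, contradicting the fact that no element of $A$ is divisible by $d$. (As you note in passing, you do not even need the preliminary observation that each $a_i\not\equiv 0\pmod d$; the block-sum contradiction covers it.) Your argument is shorter and avoids Cauchy--Davenport entirely, at the cost of proving only the existence of a zero-sum block rather than the paper's quantitative lower bound $|S_t|\geqslant t$ on the number of distinct subset sums, which is the stronger statement the paper's lemma records and which could be of independent use. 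Both arguments correctly use $d\mid N$ to make reduction mod $d$ well defined on $\mathbb{Z}_N$ and to get $|A|=\frac{d-1}{d}N$ exactly.
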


We are going to prove a lemma to show the correctness of the example, which is based on the Cauchy--Davenport Theorem~\cite{Cauchy1,Davenport1}. Throughout this section we use standard definitions and notations in Additive Combinatorics as given in~\cite{TV1}. Given $A,B\subset\mathbb{Z}$, we write
$$A+B:=\{a+b:a\in A,b\in B\}, \quad\text{and}\quad AB:=\{ab: a\in A,b\in B\}.$$
When $A=\{x\}$, we simply write $x+B:=\{x\}+B$ and $x\cdot B:=\{x\}B$. 

\begin{theorem}[Cauchy--Davenport]
Let $A,B\subset \mathbb{F}_p$, then
$$|A+B|\geqslant min\{|A|+|B|-1,p\}.$$
\end{theorem}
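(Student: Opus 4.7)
My plan is to split the proof according to whether $|A|+|B|$ exceeds $p$ or not. The regime $|A|+|B| > p$ is dispatched by a one-line pigeonhole argument: for each $c \in \mathbb{F}_p$, the sets $A$ and $c-B$ have total cardinality exceeding $p$, so they must intersect, placing $c$ in $A+B$. Hence $A+B = \mathbb{F}_p$ and $|A+B| = p = \min\{|A|+|B|-1, p\}$ automatically.

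For the main regime $|A|+|B| \leq p$, I would argue by induction on $|B|$. The base case $|B|=1$ is immediate since $|A+B|=|A|$. For $|B| \geq 2$, I plan to deploy the \emph{Dyson $e$-transform}: for each $e \in \mathbb{F}_p$, define
$$A_e := A \cup (B+e), \qquad B_e := B \cap (A-e).$$
A short case analysis (splitting an element of $A_e$ according to whether it lies in $A$ or in $B+e$, and shifting by $\pm e$ appropriately) shows $A_e + B_e \subseteq A+B$, and since $|A \cap (B+e)| = |B \cap (A-e)|$, inclusion--exclusion yields $|A_e| + |B_e| = |A| + |B|$. If I can locate an $e$ with $1 \leq |B_e| < |B|$, then the inductive hypothesis applied to $(A_e, B_e)$ gives $|A_e+B_e| \geq |A_e|+|B_e|-1 = |A|+|B|-1$, and the containment $A_e+B_e \subseteq A+B$ closes the induction.

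The principal obstacle, and the only place where primality of $p$ is used, is producing such an $e$. Nonemptiness of $B_e$ is easy: choosing $e = a - b_0$ for any $a \in A$ and $b_0 \in B$ puts $b_0$ into $B_e$. For the strict inequality $|B_e| < |B|$, I would argue by contradiction. If $B \subseteq A - e$ were to hold for every $e$ of the form $a - b_0$ with $a$ ranging over $A$, then rearranging gives $(B - b_0) + a \subseteq A$ for every $a \in A$, hence $(B-b_0) + A \subseteq A$. Since $0 \in B - b_0$ the reverse inclusion is automatic, so every element of $B - b_0$ lies in the stabilizer $H := \{h \in \mathbb{F}_p : A+h = A\}$. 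Because $|B| \geq 2$ this forces $H$ to be a nontrivial subgroup; since $\mathbb{F}_p$ has no proper nontrivial subgroups, $H = \mathbb{F}_p$, and therefore $A = \mathbb{F}_p$. This contradicts $|A| \leq p-2$, which follows from $|A|+|B|\leq p$ and $|B|\geq 2$. It is precisely this subgroup step that breaks in a general cyclic group and forces one to pass to Kneser's theorem in that broader setting.
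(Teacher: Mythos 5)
Your proof is correct, but note that the paper does not actually prove this statement: it records the Cauchy--Davenport theorem as a classical result, citing the original papers of Cauchy and Davenport, and then uses it as a black box in the lemma that follows. So there is no "paper proof" to match; what you have written is a complete, self-contained argument, namely the standard Dyson $e$-transform proof. All the key steps check out: the pigeonhole disposal of the regime $|A|+|B|>p$, the containment $A_e+B_e\subseteq A+B$, the cardinality identity $|A_e|+|B_e|=|A|+|B|$ via $|A\cap(B+e)|=|B\cap(A-e)|$, and the stabilizer argument showing that if no transform shrinks $B$ then the stabilizer of $A$ is a nontrivial subgroup of $\mathbb{F}_p$, hence all of $\mathbb{F}_p$, contradicting $|A|\leqslant p-2$. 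You correctly isolate the single use of primality, and your closing remark about Kneser's theorem accurately describes why the argument does not transfer verbatim to general $\mathbb{Z}_N$ --- a point that is actually relevant to the paper, since its Lemma 2.3 handles composite moduli by a separate induction rather than by Cauchy--Davenport. The only pedantic caveat is that the statement, as transcribed in the paper, omits the hypothesis that $A$ and $B$ are nonempty; your proof (like every proof of this theorem) implicitly assumes it, and you might say so explicitly in the base case.
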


\begin{lemma}
    Let $a_i\in\mathbb{Z}_d\backslash \{0\}$, $\lambda_i\in\{0,1\}$. We define
    $$S_t:=\left\{\sum_{i=1}^t\lambda_ia_i:(\lambda_1,\lambda_2,\ldots,\lambda_t)\neq(0,0,\ldots,0)\right\}$$
    where $t\leqslant d$. If $0\notin S_t$, then $|S_t|\geqslant t$. 
\end{lemma}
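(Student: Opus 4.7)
The plan is to prove the lemma by induction on $t$. The base case $t=1$ is immediate since $S_1=\{a_1\}$ and $a_1\neq 0$. For the inductive step, assume the statement for $t-1$ and suppose $0\notin S_t$; then $0\notin S_{t-1}$, so the induction hypothesis gives $|S_{t-1}|\geqslant t-1$. Setting $T:=\{0\}\cup S_{t-1}$ (so $|T|\geqslant t$) and splitting $S_t$ according to whether $\lambda_t$ equals $0$ or $1$ yields the key identity
\[
\{0\}\cup S_t \;=\; T\cup(T+a_t) \;=\; T+\{0,a_t\}.
\]
Since $0\notin S_t$, the desired bound $|S_t|\geqslant t$ reduces to showing $|T+\{0,a_t\}|\geqslant |T|+1$, i.e., that $T+a_t$ is not entirely contained in $T$.

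The main (and essentially only) obstacle is exactly this strict growth. Cauchy--Davenport gives it directly only when $d$ is prime, so in the general cyclic setting one has to rule out the possibility that translation by $a_t$ preserves $T$. I would proceed by contradiction: if $T+a_t\subseteq T$, then $|T+a_t|=|T|$ forces $T+a_t=T$, so $a_t$ lies in the stabilizer subgroup $H:=\{h\in\mathbb{Z}_d:T+h=T\}$. Because $0\in T$ and $H$ is a subgroup, the whole of $H$ is contained in $T$; in particular $-a_t\in T$. Since $a_t\neq 0$, this forces $-a_t\in S_{t-1}$, so $-a_t=\sum_{i\in I}a_i$ for some non-empty $I\subseteq[t-1]$, giving $\sum_{i\in I\cup\{t\}}a_i=0$, which contradicts $0\notin S_t$.

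Hence some element of $T+a_t$ must lie outside $T$, so $|T+\{0,a_t\}|\geqslant |T|+1\geqslant t+1$, and therefore $|S_t|\geqslant t$, closing the induction. When $d$ is prime the stabilizer detour is unnecessary: the Cauchy--Davenport theorem recalled just above the lemma directly gives $|T+\{0,a_t\}|\geqslant \min(|T|+1,d)\geqslant t+1$ as long as $t<d$, while the corner case $t=d$ is absorbed by the same contradiction (any subset of $\mathbb{Z}_d$ avoiding $0$ has size at most $d-1$, so once the induction has been carried to $t=d-1$ the conclusion for $t=d$ is automatic).
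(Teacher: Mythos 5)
Your proof is correct, and it takes a genuinely different route from the paper. The paper splits into two cases: for prime $d$ it applies Cauchy--Davenport repeatedly to $\{a_1\}+\sum_{i\geqslant 2}\{0,a_i\}$, and for composite $d$ it runs a double induction (on $d$, then on $t$): from the translation-invariance $S_t=S_t+a_{t+1}$ it sums all elements of the set to get $ta_j\equiv 0\pmod d$ for every $j$, then divides out a common factor and invokes the lemma modulo a proper divisor $s<d$. You instead run a single induction on $t$, valid for all moduli at once: after the same reduction to strict growth of $T+\{0,a_t\}$ with $T=\{0\}\cup S_{t-1}$, you note that $T+a_t=T$ together with $0\in T$ immediately yields $-a_t\in T$, hence $-a_t\in S_{t-1}$ and a vanishing subset sum $a_t+\sum_{i\in I}a_i=0$, contradicting $0\notin S_t$. (The stabilizer-subgroup language is not even needed: $T+a_t=T$ gives $T=T-a_t\ni -a_t$ directly.) Your argument dispenses with Cauchy--Davenport, with the prime/composite case distinction, and with the induction on the modulus, and it does not actually use the hypothesis $t\leqslant d$; the paper's composite-case reduction, by contrast, quietly needs the lemma in $\mathbb{Z}_s$ for the same length $t+1$, which is a point your approach sidesteps entirely. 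What the paper's version buys is mainly the explicit link to Cauchy--Davenport; your closing remark about the prime case and the vacuous corner $t=d$ is harmless but unnecessary, since your main induction already covers every case.
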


\begin{proof} 
    When $d=p$ is a prime, the lemma is indicated by the Cauchy--Davenport Theorem. Indeed, note that
    $$S_t\supseteq\{a_1\}+\sum_{i=2}^t\{0,a_i\}.$$
    The right hand side is the sum of $k$ sets. We only need to consider the case that $|\{a_1\}+\sum_{i=2}^k\{0,a_i\}|$ is strictly less than $d$ for all $k\leqslant t$, or else $|S_t|=d$ so that $0\in S_t$. By using the Cauchy--Davenport Theorem for $(t-1)$ times, we have
    $$|\{a_1\}+\sum_{i=2}^t\{0,a_i\}|\geqslant|\{a_1\}|+\sum_{i=2}^t|\{0,a_i\}|-(t-1).$$
    Thus $|S_t|\geqslant t.$

    When $d$ is not a prime, the proof goes by induction on $d$ and then induction on $t$. According to the discussion above, we have already proved the lemma for prime factors, as a foundation of the induction on $d$. Now we suppose that the lemma holds in $\mathbb{Z}_k$ with $k$ being all the factors of $d$ and start our induction on $t$. To begin with, $|S_t|\geqslant t$ for $t=1,2$. Indeed, when $t=2$, $S_t=\{a_1,a_2,a_1+a_2\}.$ It is impossible to have $a_1=a_2=a_1+a_2$, which implies that $a_1=a_2=0$. Now we assume $|S_k|\geqslant k$ for all integers $k\leqslant t$. Note that
    $$S_{t+1}\supseteq S_t+\{0,a_{t+1}\}\supseteq S_t.$$
    The induction hypothesis gives $|S_{t+1}|\geqslant |S_t|\geqslant t$. Suppose $|S_{t+1}|=t<t+1$, then
    $$S_{t+1}=S_t=S_t+a_{t+1}.$$
    Adding up all the elements in three sets respectively, we have
    $$\sum_{x\in S_{t+1}}x=\sum_{x\in S_{t}}x=\sum_{x\in S_{t}}(x+a_{t+1}).$$
    The second equality implies 
    $$ta_{t+1}\equiv 0\pmod{d}.$$
    Similarly by symmetry we have
    $$ta_j\equiv 0\pmod{d}$$
    for $j=1,2,\ldots,t+1.$
    Let $(t,d)=s,t=st_1,d=sd_1$, then $(t_1,d_1)=1$ and $d_1\mid a_j.$ We must have $s>1$, or else $d\mid a_j$, i.e. $a_j=0$. Now we have
    $$a_i'\not\equiv 0\pmod{s},$$
    $$S_{t+1}'=\left\{\sum_{i=1}^{t+1}\lambda_ia_i':(\lambda_1,\lambda_2,\ldots,\lambda_t)\neq(0,0,\ldots,0)\right\}.$$
    where $s=d/d_1<d,a_i'=a_i/d_i$. Clearly $|S_{t+1}'|=|S_{t+1}|=t$, so by induction hypothesis there exist $\lambda_1,\lambda_2,\ldots,\lambda_{t+1}$ not all zero such that
    $$\sum_{i=1}^{t+1}\lambda_ia_i'\equiv 0\pmod{s}.$$
    Hence
    $$\sum_{i=1}^{t+1}\lambda_ia_i\equiv 0\pmod{d}.$$
    Now we finish the proof by showing that $0\in S_{t+1}$ if $|S_{t+1}|\leqslant t$.
\end{proof}

\section{Related problems}

To start further discussion, we prove the case $d=3$ in advance. Recall Theorem \ref{main thm} (i).

\begin{theorem}
        Let $A$ be a 3-cube-free subset of $\mathbb{Z}_{N}$ where $3\mid N$, then
    $$|A|\leqslant \dfrac{2}{3}N.$$
\end{theorem}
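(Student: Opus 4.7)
I plan to proceed by contradiction: suppose $A\subset\mathbb{Z}_N$ is $3$-cube-free with $|A|>2N/3$, and construct a projective $3$-cube inside $A$. First, $0\notin A$, for otherwise $S=\{0,0,0\}$ gives $\Sigma^{*}S=\{0\}\subset A$. Set $H=3\mathbb{Z}_N$ and $A_i=A\cap(i+H)$ for $i=0,1,2$. Since $|A_i|\leqslant|H|=N/3$, the assumption $|A|>2|H|$ forces $A_0\neq\varnothing$; fix $g=3k\in A_0$ to serve as the apex of the cube.

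The strategy is to combine two families of cubes so that the hypothesis $|A|>2|H|$ forces at least one to be realised inside $A$. The first is the diagonal cube $S=\{x,x,x\}$ with $\Sigma^{*}S=\{x,2x,3x\}$: $3$-cube-freeness means $A\cap 2^{-1}A\cap 3^{-1}A=\varnothing$, and when $\gcd(2,N)=1$ inclusion-exclusion gives $0\geqslant 2|A|+3|A_0|-2N$, hence $|A_0|<2N/9$ and $|A_1|+|A_2|>4N/9$; WLOG $|A_1|>2N/9$. The second family is $S=\{g,g,x\}$ with $x\in 1+H$ and $\Sigma^{*}S=\{g,x,2g,g+x,2g+x\}$: it sits inside $A$ whenever $2g\in A_0$ and $x\in A_1\cap(A_1-g)\cap(A_1-2g)$. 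The latter intersection is nonempty by inclusion-exclusion once $|A_1|>2N/9$, and the former is nonempty once $|A_0|>N/6$, closing the argument whenever $N/6<|A_0|\leqslant 2N/9$.

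The main obstacle is the regime $|A_0|\leqslant N/6$, where neither of the above cubes is automatic. Here I would use the all-$A_1$ cube $S=\{x,y,g-x-y\}$ with $x,y\in A_1$, whose projective cube $\{x,y,g-x-y,x+y,g-x,g-y,g\}$ sits in $A$ provided $x,y,g-x-y\in A_1$ and $x+y,g-x,g-y\in A_2$. Translating the three cosets to $H\cong\mathbb{Z}_{N/3}$, this becomes a convolution condition on the sets $\tilde A_1=A_1\cap(g-A_2)$ and $\tilde A_2=A_2\cap(g-A_1)$; in our regime $|A_1|+|A_2|>N/2$ forces $|\tilde A_i|>|H|/2$, and Kneser's theorem inside $H$ should furnish a valid $(x,y)$, with the bounded-stabiliser case handled by passing to a quotient and iterating the same scheme. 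The parity case $\gcd(2,N)=2$ can be absorbed by the same framework after working inside the index-$2$ subgroup. The resulting coset-decomposition strategy --- pick the apex $g$ inside $A_0$ and recover the rest of the cube inside $A_1\cup A_2$ --- is what presumably gets generalised in the proofs of parts (ii) and (iii) of Theorem~\ref{main thm}.
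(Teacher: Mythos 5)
Your plan is genuinely different from the paper's proof, and its core obstacle is not where you think it is. The load-bearing uses of $\gcd(2,N)=1$ are a real gap. Your first step needs $|\{x:2x\in A\}|=|A|$ to get $2|A|+3|A_0|\leqslant 2N$ and hence $|A_0|<2N/9$; your second step needs doubling to be a bijection on $H$ to extract a pair $g,2g\in A_0$ from $|A_0|>N/6$. When $N$ is even both collapse: $|\{x:2x\in A\}|=2|A\cap 2\mathbb{Z}_N|$ can be much smaller than $|A|$, so the diagonal cube gives no bound on $|A_0|$, and a subset of $H$ of density slightly above $1/2$ need not contain any pair $\{g,2g\}$. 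The one-sentence remedy ``absorb it by working inside the index-$2$ subgroup'' is not an argument: the hypothesis $|A|>2N/3$ only guarantees density greater than $1/3$ of $A$ on $2\mathbb{Z}_N$, so no statement of the same shape is inherited there, and you do not explain how any of the three cube families would be rebuilt. Since the theorem is asserted for every $N$ divisible by $3$ (in particular $N\equiv 0\pmod 6$), this is a genuine hole, not a routine reduction. A minor remark in the other direction: the Kneser step you hedge with ``should furnish a valid $(x,y)$'' does close, and more simply than you suggest --- if the stabiliser $K$ of $\tilde{A}_1+\tilde{A}_1$ satisfies $|K|\leqslant |H|/2$ then $|\tilde{A}_1+\tilde{A}_1|>|H|/2$ and it must meet $\tilde{A}_2$ by counting inside the coset $2+H$, while $|K|>|H|/2$ forces $K=H$ and the sumset is the whole coset; no quotient-and-iterate is needed, but this has to be written out rather than asserted. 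With that done, your argument does prove the odd-$N$ case.

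For comparison, the paper's proof sidesteps all of this with a short two-case argument that is insensitive to the parity of $N$ (and does not even use $3\mid N$). If no $x$ has $\{x,2x\}\subset A$, then $A\cap 2\cdot A=\varnothing$, and since every element of $\mathbb{Z}_N$ has at most two preimages under doubling, $|2\cdot A|\geqslant |A|/2$, giving $N\geqslant |A|+|2\cdot A|\geqslant \tfrac32|A|$. Otherwise fix $x$ with $x,2x\in A$ and consider the cubes generated by $\{x,x,y\}$ as $y$ ranges over $A$: cube-freeness forces $A\cap(A-x)\cap(A-2x)=\varnothing$, hence $3|A^c|\geqslant N$ and $|A|\leqslant 2N/3$. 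Your coset-decomposition and sumset machinery is considerably heavier, and as it stands it establishes the statement only for odd $N$.
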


\begin{proof}
    The proof consists of two parts discussing whether $A$ contains $\{x,2x\}$ as a subset for some $x$, namely $\{x,2x\}$-free or not. When $A$ is $\{x,2x\}$-free, it is equivalent to 
    $$A\cap 2\cdot A=\varnothing.$$
    Here $2\cdot A$ is defined by $2\cdot A:=\{2a:a\in A\}$, as mentioned in Section \ref{tightness}. Note that for all $a\in \mathbb{Z}_N$, there is at most one pair $(b,c)$ with $b\neq c$ such that $2b=2c=a$, which implies 
    $$|2\cdot A|\geqslant \dfrac{1}{2}|A|.$$
    Thus
    $$N\geqslant |A|+|2\cdot A|\geqslant \dfrac{3}{2}|A|,$$
    and then $A\leqslant 2N/3.$
    
    Now let $A$ be not $\{x,2x\}$-free, then there exists $x$ such that $x,2x\in A$. Consider the cube generated by $\{x,x,y\}$ where $y$ is selected among all the elements in $A$, we have
    $$A\cap (A-x)\cap (A-2x)=\varnothing.$$
    By taking the complementary set 
    $$A^c\cup (A-x)^c\cup (A-2x)^c=\mathbb{Z}_N.$$
    Note that both $(A-x)$ and $(A-2x)$ are copies of $A$, thus
    $$3|A^c|\geqslant N,$$
    and then $|A|\leqslant 2N/3.$
\end{proof}

Actually the proof above can be generalized to all cyclic group, not necessarily $3\mid N$. It gives a quite trivial upper bound of $2N/3$, but in some cases there might exist a better one, for instance $(5/8+o(1))N$ conjectured by Long and Wagner~\cite{LW1} where $N=2^k.$

Inspired by the proof on $3$-cube discussing whether $\{x,2x\}$ is forbidden, one can naturally expect to generalize the proof to larger cubes, which leads to the conjecture as follows.

\begin{conjecture}\label{conj set-free}
    Let $A$ be a $\{x,2x,\ldots,(d-1)x\}$-free subset of $\mathbb{Z}_N$ where $d\mid N$, then
    $$|A|\leqslant \dfrac{d-1}{d}N.$$
\end{conjecture}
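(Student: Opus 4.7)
The plan follows the spirit of the $d=3$ argument in Theorem~\ref{main thm} (i) by translating the freeness hypothesis into a covering statement on the complement $C:=A^c$. First note that $0\in C$: otherwise, taking $x=0$ in the hypothesis would force $\{0\}\subseteq A$, contradicting freeness. The condition that $A$ is $\{x,2x,\ldots,(d-1)x\}$-free is then equivalent to: for every $x\in\mathbb{Z}_N$ some $k\in\{1,\ldots,d-1\}$ satisfies $kx\in C$, i.e.
\[
\mathbb{Z}_N \;=\; \bigcup_{k=1}^{d-1} k^{-1}C, \qquad k^{-1}C\;:=\;\{x\in\mathbb{Z}_N:\ kx\in C\}.
\]
So it suffices to prove $|C|\geqslant N/d$.

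Since the map $x\mapsto kx$ is $\gcd(k,N)$-to-one onto the subgroup $\gcd(k,N)\mathbb{Z}_N$, one computes $|k^{-1}C|=\gcd(k,N)\cdot|C\cap\gcd(k,N)\mathbb{Z}_N|$. The union bound $\sum_k|k^{-1}C|\geqslant N$ then reads
\[
\sum_{k=1}^{d-1}\gcd(k,N)\cdot\bigl|C\cap\gcd(k,N)\mathbb{Z}_N\bigr|\;\geqslant\;N. \tag{$\star$}
\]
In the principal ``coprime case'' where $\gcd(k,N)=1$ for every $k\in[1,d-1]$ --- equivalently, when $d$ is at most the smallest prime factor of $N$, which covers the hypothesis of Theorem~\ref{main thm} (ii) and the $d=p$ subcase of (iii) --- each term of $(\star)$ collapses to $|C|$, so $(d-1)|C|\geqslant N$, hence $|C|\geqslant N/(d-1)\geqslant N/d$ whenever $N\geqslant d$. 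This already settles the conjecture in those situations.

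The main obstacle is the remaining case, where some $\gcd(k,N)>1$ (equivalently, $N$ has a prime factor less than $d$). Then $(\star)$ alone is too weak, because $|k^{-1}C|$ can far exceed $|C|$ if $C$ is concentrated in a small subgroup $g\mathbb{Z}_N$. My plan for this regime is a dichotomy: either $|C\cap g\mathbb{Z}_N|$ is close to the ``expected'' value $|C|/g$ for every divisor $g\mid N$ with $g\leqslant d-1$, in which case $(\star)$ essentially still forces $|C|\geqslant N/d$; or $C$ concentrates disproportionately inside some $g\mathbb{Z}_N$, in which case one restricts to that subgroup. The set $A\cap g\mathbb{Z}_N$ is again $\{x,2x,\ldots,(d-1)x\}$-free inside $g\mathbb{Z}_N\cong\mathbb{Z}_{N/g}$ (under the natural identification), and one tries to apply the conjecture inductively on $N$.

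The hardest step will be closing this induction uniformly. The reduction to $\mathbb{Z}_{N/g}$ requires $d\mid N/g$, which may fail when $\gcd(g,d)>1$ --- for instance $d=4,\ N=12,\ g=2$ gives $N/g=6$, which is not divisible by $4$ --- and thus demands a secondary adjustment or finer case split. On the positive side, the bound $(d-1)N/d$ is typically not tight (a direct check shows $|A|\leqslant 8<9$ in $\mathbb{Z}_{12}$ with $d=4$), so there is genuine slack to exploit; converting that slack into a uniform proof for every pair $(d,N)$ with $d\mid N$ is the conjectural core of the problem.
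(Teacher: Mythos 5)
First, a framing point: the statement you are proving is stated in the paper as a \emph{conjecture}; the paper itself does not prove it in general, but only in the special cases recorded in Theorem~\ref{main thm} (ii) and (iii) (Sections 4.1 and 4.2). Your proposal is likewise not a proof of Conjecture~\ref{conj set-free}. The only regime you actually close is the coprime case, where $\gcd(k,N)=1$ for all $k\in[1,d-1]$, i.e.\ $d$ is at most the smallest prime factor of $N$; and there your argument is essentially the paper's Section 4.1 argument in dual form: the paper double counts the family $\mathcal{F}=\{\{x,2x,\ldots,(d-1)x\}:x\in\mathbb{Z}_N\setminus\{0\}\}$, each nonzero element lying in exactly $d-1$ of its members, which is the same count as your union bound $\mathbb{Z}_N=\bigcup_{k}k^{-1}C$ with $|k^{-1}C|=|C|$. (A small convention issue: the paper's family excludes $x=0$, so you cannot automatically place $0$ in $C$; but covering $\mathbb{Z}_N\setminus\{0\}$ still gives $|C|\geqslant (N-1)/(d-1)\geqslant N/d$ once $N\geqslant d$, so this is harmless.)

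The genuine gap is exactly where you flag it: the non-coprime regime. Your dichotomy (``either $C$ is spread over the subgroups $g\mathbb{Z}_N$, or it concentrates in one of them and we restrict'') is only a sketch --- neither branch is quantified, and, as you yourself observe, the restriction step can land in $\mathbb{Z}_{N/g}$ with $d\nmid N/g$, so the induction as stated does not close. Note in particular that your plan does not even recover the paper's case (iii): there $d=p^j$ with $j\geqslant 2$ shares every prime factor with $N=p^l$, so the coprime collapse never applies, and the paper handles it not by induction on $N$ but by proving the stronger bound $(1-1/(p^j-1))N$ for $\{x,2x,\ldots,(p^j-1)x\}$-free sets, via the partition of $\mathbb{Z}_{p^l}$ into layers $L_i$ and a blockwise double count of the families $\mathcal{F}_a$ supported on $d$ consecutive layers --- in effect a localized version of the Section 4.1 count, which plays the role your missing ``concentration'' branch would have to play. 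If you want to push your program, making your inequality $(\star)$ local to such layers or cosets is the natural next step; as it stands, your proposal establishes the same partial cases the paper already proves in Theorem~\ref{main thm} (ii) (and $d=p$ in (iii)), together with an unproven plan for the rest, so it cannot be accepted as a proof of the conjecture.
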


\noindent\emph{Proof of Conj \ref{conj d-cube} assuming Conj \ref{conj set-free}.} It suffices to prove it when $A$ is not $\{x,2x,\ldots,(d-1)x\}$-free. Now there must be an $x$ such that $x,2x,\ldots,(d-1)x\in A$. Consider the $d$-cube generated by $\{x,x,\ldots,x,y\}$ where $y$ is selected among all the elements in $A$, we have
$$A\cap (A-x)\cap (A-2x)\cap\cdots\cap (A-(d-1)x)=\varnothing.$$
This implies
$$|A|\leqslant \dfrac{d-1}{d}N.$$
\qed

It must be pointed out that we have a similar bound for $\{x,dx\}$-free subsets as follows.

\begin{theorem}\label{double-free}
    Let $A$ be a $\{x,dx\}$-free subset of $[N]$, then 
    $$|A|\leqslant\dfrac{d}{d+1}N+O(\log N).$$
\end{theorem}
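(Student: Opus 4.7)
The plan is to partition $[N]$ into geometric chains on which the $\{x, dx\}$-free condition reduces to an independent-set constraint on a path. For each $m\in [N]$ with $d\nmid m$, I would define $C_m := \{m, dm, d^2 m, \ldots\} \cap [N]$ and set $\ell_m := |C_m|$. Since every $n\in[N]$ has a unique representation $n = d^i m$ with $d\nmid m$, the family $\{C_m\}$ partitions $[N]$; in particular $\sum_m \ell_m = N$.

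The key observation is that within each chain the $\{x,dx\}$-free condition forbids two ``consecutive'' elements $d^i m$ and $d^{i+1} m$ from both lying in $A$. Hence $A\cap C_m$ is an independent set in a path on $\ell_m$ vertices, so $|A\cap C_m|\leq \lceil \ell_m/2\rceil$. Summing over $m$,
\[
|A| \;\leq\; \sum_m \left\lceil \tfrac{\ell_m}{2}\right\rceil \;=\; \tfrac{1}{2}\sum_m \ell_m + \tfrac{1}{2}\,\#\{m:\ell_m\text{ odd}\} \;=\; \tfrac{N}{2} + \tfrac{1}{2}\,\#\{m:\ell_m\text{ odd}\}.
\]

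It will then remain to estimate the number of odd-length chains. A chain has $\ell_m = 2k+1$ precisely when $m\in (N/d^{2k+1},\, N/d^{2k}]$, and the count of integers in such a window with $d\nmid m$ is $\tfrac{d-1}{d}\cdot(N/d^{2k}-N/d^{2k+1}) + O(1) = N(d-1)^2/d^{2k+2} + O(1)$. Summing the resulting geometric series over $k = 0, 1, \ldots, \lfloor\log_d N\rfloor$ gives $N(d-1)/(d+1) + O(\log N)$. Substituting this back,
\[
|A| \;\leq\; \tfrac{N}{2} + \tfrac{N(d-1)}{2(d+1)} + O(\log N) \;=\; \tfrac{dN}{d+1} + O(\log N),
\]
which is the claimed bound.

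The argument is essentially bookkeeping once the chain decomposition and the independent-set reduction are in place. The only real subtlety is controlling the floor-function errors when counting the correct residues inside each of the $O(\log N)$ windows, but since each window contributes $O(1)$ error, the total is absorbed in the additive $O(\log N)$ slack permitted by the statement; I anticipate no genuine combinatorial obstacle beyond this routine accounting.
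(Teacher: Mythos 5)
Your proposal is correct and follows essentially the same route as the paper: both decompose $[N]$ into the geometric chains $\{m, dm, d^2m, \ldots\}$ (for $d \nmid m$) and reduce the $\{x,dx\}$-free condition to the bound $|A\cap C_m|\leqslant \lceil \ell_m/2\rceil$ coming from independent sets in a path. The only difference is bookkeeping: you count the odd-length chains via the windows $(N/d^{2k+1}, N/d^{2k}]$, whereas the paper evaluates the same extremal (alternating) configuration layer by layer via the sets $L_i=\{x: d^{i-1}\mid x,\ d^i\nmid x\}$; both computations yield $\tfrac{d}{d+1}N+O(\log N)$.
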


\begin{proof}
    Given $d$, note that every positive integer $m$ can be uniquely written as $m=d^s\times l$ with $s$ being a non-negative integer and $l$ being a positive integer not divisible by $d$. Thus we can divide all the integers into different chains starting with integers not divisible by $d$: $l,dl,d^2l,\ldots$. We denote the chain started with $l$ by $C_l$.
    
    It is clear that $A$ is $\{x,dx\}$-free if and only if there are no two elements of $A$ adjacent in one chain. To acquire the upper bound, we just need to consider the extreme cases on different chains independently. Given $l$ and $C_l$, since only one of $\{d^kl,d^{k+1}l\}$ can be contained in $A$ for all $k\geqslant 0$ such that $d^{k+1}l\leqslant N$, the extreme case appears when the elements are selected alternately. More precisely, when $|[N]\cap C_l|$ is odd, the elements of $A\cap C_l$ take up all the odd positions in $C_l$; when $|[N]\cap C_l|$ is even, the elements of $A\cap C_l$ take up either all the odd positions or all the even positions in $C_l$. 
    
    Since different chains have different lengths, it is difficult to count $|A\cap C_l|$ respectively and then add them together. Instead, we count them by layers which are defined by
    $$L_i:=\{x\in\mathbb{Z}_{+}: d^{i-1}\mid x,\ d^i\nmid x\}.$$
    It is clear that all the integers can be divided into different layers, i.e.
    $$\mathbb{Z}_+=\bigcup_{i=1}^\infty L_{i}.$$
    
    For convenience, we may assume the elements of $A$ take up all the odd positions in $C_l$ no matter whether $|[N]\cap C_l|$ is even or odd, as it does not change the size. Based on this assumption, the maximal $A$ can be write as  
    $$A=\left(\bigcup_{i=0}^\infty L_{2i+1}\right)\cap [N].$$
    Also all the layers are pairwise disjoint, thus
    $$|A|=\sum_{i=0}^\infty|L_{2i+1}\cap[N]|.$$
    Suppose $d^s\leqslant N<d^{s+1}$. When $s$ is odd, we have
    \begin{align*}
        |A| &= \sum_{i=0}^{(s-1)/2}|L_{2i+1}\cap[N]| \\
        &= \sum_{i=0}^{(s-1)/2}\lfloor\dfrac{d-1}{d^{2i+1}}N+\dfrac{d-1}{d}\rfloor \\
        &= \sum_{i=0}^{(s-1)/2}\left(\dfrac{d-1}{d^{2i+1}}N+\dfrac{d-1}{d}\right)+O(s) \\
        &= \dfrac{d}{d+1}(1-\dfrac{1}{d^{s+1}})N+\dfrac{(s+1)(d-1)}{2d}+O(s) \\
        &= \dfrac{d}{d+1}N+O(s)=\dfrac{d}{d+1}N+O(\log N).
    \end{align*}
    And when $s$ is even, we have
    \begin{align*}
        |A| &= \sum_{i=0}^{s/2}|L_{2i+1}\cap[N]| \\
        &= \sum_{i=0}^{s/2}\lfloor\dfrac{d-1}{d^{2i+1}}N+\dfrac{d-1}{d}\rfloor \\
        &= \sum_{i=0}^{s/2}\left(\dfrac{d-1}{d^{2i+1}}N+\dfrac{d-1}{d}\right)+O(s) \\
        &= \dfrac{d}{d+1}(1-\dfrac{1}{d^{s+2}})N+\dfrac{(s+2)(d-1)}{2d}+O(s) \\
        &= \dfrac{d}{d+1}N+O(s)=\dfrac{d}{d+1}N+O(\log N).
    \end{align*}
\end{proof} 

This bound may be helpful when we consider cube-free subsets of $[N]$. It is worth noticing that the remainder term $O(\log N)$ cannot be removed. Indeed, take the case $d=2$ as an example and we have the following result.

\begin{theorem}\footnote{The author appreciates Sean Eberhard for providing the idea of comparing $N$ with $4N$, which helped give rise to the construction in the following proof.}
    Let $A$ be an $\{x,2x\}$-free subset of $[N]$, then 
    $$|A|\leqslant \dfrac{2}{3}N+O(\log N).$$
    Moreover, there exists $\varepsilon>0$, such that for all $n>0$, there exists $N>n$ such that $[N]$ contains a $\{x,2x\}$-free subset with size at least $2N/3+\varepsilon\log N$. 
\end{theorem}

\begin{proof}
    Take $d=2$ in Theorem~\ref{double-free} then we have $|A|\leqslant 2N/{3}+O(\log N).$ Now consider the sequence $a_n=(4^n-1)/3$ which converges to infinity, we are going to show that $D(a_n)=2a_n/3+n/3$, so that $\{a_n\}$ is the sequence we want. Here $D(N)$ is the size of the largest $\{x,2x\}$-free subsets of $[N]$.

    The proof goes by induction. Recall that 
    $$D(N)=\sum_{k=0}^\infty\left\lfloor{\dfrac{N+4^k}{2\cdot4^k}}
    \right\rfloor.$$
    Since $a_{n+1}=4a_n+1$, we have
    \begin{align*}
        D(a_{n+1}) &= \sum_{k=0}^\infty\left\lfloor{\dfrac{a_{n+1}+4^k}{2\cdot4^k}}\right\rfloor \\
        &= \sum_{k=0}^\infty\left\lfloor{\dfrac{4a_n+1+4^k}{2\cdot4^k}}
    \right\rfloor \\
        &= \sum_{k=1}^\infty\left\lfloor{\dfrac{4a_n+4^k}{2\cdot4^k}}
    \right\rfloor +2a_n+1 \\
        &= \sum_{k=0}^\infty\left\lfloor{\dfrac{a_n+4^k}{2\cdot4^k}}
    \right\rfloor+2a_n+1 \\
        &= D(a_n)+2a_n+1
    \end{align*}
    It remains to show that $D(1)=1$, which is clear.
\end{proof}

As for cyclic group case, we can get rid of the remainder term.

\begin{theorem}
    Let $A$ be a $\{x,dx\}$-free subset of $\mathbb{Z}_N$ and $k=(d,N)$, then 
    $$|A|\leqslant\dfrac{k}{k+1}N.$$
\end{theorem}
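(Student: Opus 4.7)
My plan is to recast the $\{x,dx\}$-free condition as a disjointness statement and then exploit the fact that multiplication by $d$ on $\mathbb{Z}_N$ is a $k$-to-$1$ group homomorphism. The whole proof will collapse to a two-line counting inequality; no layer or chain decomposition (as used for the $[N]$-analogue in Theorem \ref{double-free}) is needed, and this is precisely why the $O(\log N)$ error term can be dropped in the cyclic setting.

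First, I would observe that $A$ is $\{x,dx\}$-free iff there is no $x\in A$ with $dx\in A$, which is the single clean identity
$$A\cap (d\cdot A)=\varnothing, \qquad \text{where } d\cdot A=\{da:a\in A\}.$$
Note that any fixed point $x_0$ with $dx_0=x_0$ (equivalently $(d-1)x_0\equiv 0\pmod N$) automatically lies in $A\cap d\cdot A$ whenever $x_0\in A$, so such points cannot appear in $A$; this is handled for free by the identity above.

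Second, I would study the homomorphism $\phi:\mathbb{Z}_N\to\mathbb{Z}_N$ defined by $\phi(x)=dx$. Its kernel consists of those $x$ with $N\mid dx$, which is precisely $(N/k)\mathbb{Z}_N$ and has exactly $k$ elements. Hence $\phi$ is $k$-to-$1$ onto its image, so for any subset $A\subset\mathbb{Z}_N$,
$$|d\cdot A|=|\phi(A)|\geqslant \frac{|A|}{k}.$$

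Finally, combining the two observations with the disjointness of $A$ and $d\cdot A$,
$$N\geqslant |A|+|d\cdot A|\geqslant |A|+\frac{|A|}{k}=\frac{k+1}{k}|A|,$$
which rearranges to $|A|\leqslant \frac{k}{k+1}N$. The only nontrivial check is that $\ker\phi$ has exactly $k$ elements, and that is just the standard fact that $dx\equiv 0\pmod N$ has $\gcd(d,N)=k$ solutions; there is no substantive obstacle.
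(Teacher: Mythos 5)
Your proposal is correct and is essentially the paper's own argument: the paper likewise reduces the hypothesis to $A\cap d\cdot A=\varnothing$ and shows $|d\cdot A|\geqslant |A|/k$ by noting that $da\equiv db\pmod N$ forces $a\equiv b\pmod{N/k}$, which is exactly your kernel-size computation for multiplication by $d$. The homomorphism phrasing and the explicit remark about fixed points are just cleaner packaging of the same counting step.
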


\noindent\emph{Proof.} We are going to count the number of solutions to the equation $x_0=da$ where $x_0$ is fixed. Suppose $da\equiv db\pmod{N}$, then
$$a\equiv b\pmod{N/k}.$$
This implies there will be at most $k$ solutions to the equation. Thus 
$$|d\cdot A|\geqslant\dfrac{1}{k}|A|.$$
Since $A\cap d\cdot A=\varnothing$, we have
$$|A|\leqslant\dfrac{k}{k+1}N.$$
\qed

\begin{corollary}\label{set-free 2}
    Let $A$ be a $\{x,(d-1)x\}$-free subset of $[N]$ where $d\mid N$, then $k=(d-1,N)\leqslant d-1$ and
    $$|A|\leqslant\dfrac{k}{k+1}N\leqslant\dfrac{d-1}{d}N.$$
\end{corollary}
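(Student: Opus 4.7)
The plan is to derive this corollary as an immediate consequence of the preceding theorem, applied with the multiplier $d$ replaced by $d-1$. (I read the ambient set as $\mathbb{Z}_N$ here, matching the setting of the preceding theorem; the $[N]$ in the statement looks like it carries over as notation from an earlier section.) First, since $k=(d-1,N)$ is by definition a divisor of $d-1$, the bound $k\leqslant d-1$ is automatic.

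Next, I would invoke the preceding theorem with multiplier $d-1$, so that the quantity $(d-1,N)$ takes the role played there by $(d,N)$. This yields the first inequality
$$|A|\leqslant\frac{k}{k+1}N$$
with no further work. For the chained inequality, I would use the monotonicity of the map $t\mapsto \frac{t}{t+1}=1-\frac{1}{t+1}$ on the positive integers: combined with $k\leqslant d-1$ this gives $\frac{k}{k+1}\leqslant\frac{d-1}{d}$, and concatenating the two bounds completes the proof.

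There is essentially no obstacle here: the statement is a direct corollary of the preceding theorem together with two elementary arithmetic observations. The only subtlety worth flagging is the discrepancy between the corollary's stated ambient set $[N]$ and the preceding theorem's ambient set $\mathbb{Z}_N$. Under the $\mathbb{Z}_N$ reading, the argument above is immediate; if $[N]$ is intended literally one would instead appeal to Theorem~\ref{double-free} (with $d$ replaced by $d-1$), whose leading constant is exactly $\frac{d-1}{d}$ and whose $O(\log N)$ remainder is harmless in comparison to the strict slack $\frac{d-k-1}{d(k+1)}N$ between $\frac{k}{k+1}N$ and $\frac{d-1}{d}N$ whenever $k<d-1$.
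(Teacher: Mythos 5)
Your proposal is correct and is exactly the intended derivation: the paper treats this as an immediate consequence of the preceding theorem applied with multiplier $d-1$, together with $k=(d-1,N)\mid d-1$ and the monotonicity of $t\mapsto t/(t+1)$. Your reading of the ambient set as $\mathbb{Z}_N$ (the ``$[N]$'' being a slip carried over from Theorem~\ref{double-free}) is also the right one, since the exact bound $\frac{k}{k+1}N$ with no error term only follows in the cyclic-group setting.
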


Since $\{x,(d-1)x\}$ is a subset of $\{x,2x,\ldots,(d-1)x\}$, a larger density is implied when the latter is forbidden. But comparing Conjecture \ref{conj set-free} with Corollary \ref{set-free 2}, we find that these two sets give rise to a same density when forbidden(or at least we expect them to).

\section{Specific cases}

In this section we are going to prove Theorem \ref{main thm} (ii) and Theorem \ref{main thm} (iii) by showing Conjecture \ref{conj set-free} holds respectively. It must be pointed out that the idea of counting the family of sets partly comes from Long and Wagner~\cite{LW1}.

\subsection{When \texorpdfstring{$d$}{} is the smallest prime factor}

We define the set $\mathcal{F}$ as
$$\mathcal{F}:=\{\{x,2x,3x,\ldots,(d-1)x\}:x\in \mathbb{Z}_{N}\backslash\{0\}\}.$$

Note that every element $B$ in $\mathcal{F}$ has size exactly $d-1$. Otherwise, there exist $i_1,i_2\in [d-1]$ and $x\in \mathbb{Z}_{N}\backslash\{0\}$ such that
$$N\mid (i_1-i_2)x.$$
Since $|i_1-i_2|\leqslant d-2$ and $N$ has no prime factors smaller than $d$, we have
$$(i_1-i_2,N)=1$$
and then $N\mid x$ which is contradictory.

Moreover, observe that every element in $\mathbb{Z}_{N}\backslash\{0\}$ appears precisely $d-1$ times among all different $B$. Indeed, for all $x_0\in\mathbb{Z}_{N}\backslash\{0\}$ and $t\in [d-1]$, the congruence equation with respect to $x$
$$x_0\equiv tx\pmod{N}$$
has one and only solution. This is because $(t,N)=1$ and thus $0,t,2t,\ldots,(N-1)t$ form a complete system of residues modulo $N$.

Now we are able to figure out the size of $\mathcal{F}$ by double counting all the elements covered.
$$(d-1)|\mathcal{F}|=(d-1)(N-1).$$

Let $A$ be a $\{x,2x,\ldots,(d-1)x\}$-free subset, it is clear that for every $B\in \mathcal{F}$ there exist at least one element $a_B\in A^c$, and $a_B$ appears repeatedly at most $d-1$ times, which indicates
$$|A^c|\geqslant \dfrac{|\mathcal{F}|}{d-1}=\dfrac{N-1}{d-1}\geqslant \dfrac{N}{d}.$$
Then 
$$|A|\leqslant N-\dfrac{N}{d}=\dfrac{d-1}{d}N.$$

\subsection{When \texorpdfstring{$N=p^l$}{}}

We are to prove a better result for an $\{x,2x,\ldots,(p^d-1)x\}$-free subset $A$:
$$|A|\leqslant (1-\dfrac{1}{p^d-1})N.$$

First we define the layers in $\mathbb{Z}_{p^l}$ by
$$L_i:=\{x\in \mathbb{Z}_{p^l}: p^{i-1}\mid x,{p^i}\nmid x\}.$$
For convenience we write 
$$L_{[a,b]}:=L_a\cup L_{a+1}\cup\cdots\cup L_{b-1}\cup L_b.$$

The proof goes by dividing $\mathbb{Z}_{p^l}$ into $\lceil(l+1)/d\rceil$ blocks, each block consisting of several continuous layers. Indeed, with $q$ being the largest integer such that $qd\leqslant l+1$, the division is
$$\mathbb{Z}_{q^l}=L_{[1,d]}\cup L_{[d+1,2d]}\cup\cdots\cup L_{[(q-1)d+1,qd]}\cup L_{[qd+1,l+1]}.$$

For an integer $a\leqslant l-d+1$, we define the set $\mathcal{F}_a$ as
$$\mathcal{F}_a:=\{\{x,2x,3x,\ldots,(p^d-1)x\}:x\in L_a\}.$$

Note that for any $B\in \mathcal{F}_a$, $B$ has size exactly $p^d-1$. Otherwise there exists $i_1,i_2\in [p^d-1]$ and $x\in L_a$ such that
$$p^l\mid (i_1-i_2)x.$$
Since $|i_1-i_2|\leqslant p^d-2$, $(i_1-i_2)$ is not divisible by $p^d$. Recall that $x\in L_a$ with $a\leqslant l-d+1$, thus we can find a contradiction.

Moreover, observe that for every $B\in \mathcal{F}_a$, $B\subset L_{[a,a+d-1]}$ and
every element of $L_{[a,a+d-1]}$ appears in precisely $(p-1)p^{d-1}$(that is, the size of $[p^d-1]\cap L_1$) different sets in $\mathcal{F}_a$. 

Now we are able to figure out the size of $\mathcal{F}_a$ by double counting all the elements covered.
$$(p^d-1)|\mathcal{F}_a|=(p-1)p^{d-1}|L_{[a,a+d-1]}|.$$

Let $A$ be an $\{x,2x,\ldots,(p^d-1)x\}$-free subset. It is clear that for every set $B\in \mathcal{F}_a$ there exists an element $x_B\in B$ with $x_B\notin A$, and every $x_B$ recurs at most $(p-1)p^{d-1}$ times, therefore
$$|A^c\cap L_{[a,a+d-1]}|\geqslant \dfrac{|\mathcal{F}_a|}{(p-1)p^{d-1}}=\dfrac{1}{p^d-1}|L_{[a,a+d-1]}|.$$
$$\dfrac{|A\cap L_{[a,a+d-1]}|}{|L_{[a,a+d-1]}|}\leqslant 1-\dfrac{1}{p^d-1}.$$
Especially, we set $a=1+td,t=0,1,\ldots,q-1$ to obtain
\begin{equation}\label{former blocks}
    \dfrac{|A\cap L_{[1+td,(t+1)d]}|}{|L_{[1+td,(t+1)d]}|}\leqslant 1-\dfrac{1}{p^d-1}.
\end{equation}

Since $0=p^l\notin A$ and $qd+1\geqslant l-d+2$ we have
\begin{equation}\label{last block}
    \dfrac{|A\cap L_{[qd+1,l+1]}|}{|L_{[qd+1,l+1]}|}\leqslant 1-\dfrac{1}{|L_{[qd+1,l+1]}|}\leqslant 1-\dfrac{1}{p^{d-1}} \leqslant 1-\dfrac{1}{p^d-1}.
\end{equation}

Finally we combine \eqref{former blocks} and \eqref{last block} to draw the conclusion that in each block $A$ has a density less than $1-1/(p^d-1)$, therefore $|A|\leqslant (1-1/(p^d-1))N$.

\section*{Acknowledgements}
The author thanks Yifan Jing and Jiaao Li for discussion and guidance.

\bibliographystyle{amsplain}
\bibliography{references}

\end{document}